\newtheorem{theorem}{Theorem}[section]
\newtheorem{lemma}[theorem]{Lemma}
\newtheorem{corollary}[theorem]{Corollary}
\theoremstyle{definition}
\theoremstyle{remark}
\numberwithin{equation}{section}
\newcommand{\be}{\begin{equation}}
\newcommand{\ee}{\end{equation}}
\newcommand{\bea}{\begin{eqnarray}}
\newcommand{\eea}{\end{eqnarray}}
\begin{document}
\begin{center}
{\large{\textbf{Almost Kenmotsu manifolds admitting certain vector fields}}}
\end{center}
\vspace{0.1 cm}

\begin{center}

Dibakar Dey and Pradip Majhi\\
Department of Pure Mathematics,\\
University of Calcutta,
35 Ballygunge Circular Road,\\
 Kolkata - 700019, West Bengal, India,\\
E-mail: deydibakar3@gmail.com; mpradipmajhi@gmail.com\\
\end{center}

\vspace{0.3 cm}
\textbf{Abstract:} In the present paper, we characterize almost Kenmotsu manifolds admitting holomorphically planar conformal vector (HPCV) fields. We have shown that if an almost Kenmotsu manifold $M^{2n+1}$ admits a non-zero HPCV field $V$ such that $\phi V = 0$, then $M^{2n+1}$ is locally a warped product of an almost Kaehler manifold and an open interval. As a corollary of this we obtain few classifications of an almost Kenmotsu manifold to be a Kenmotsu manifold and also prove that the integral manifolds of $\mathcal{D}$ are totally umbilical submanifolds of $M^{2n+1}$. Further, we prove that if an almost Kenmotsu manifold with positive constant $\xi$-sectional curvature admits a non-zero HPCV field $V$, then either $M^{2n+1}$ is locally a warped product of an almost Kaehler manifold and an open interval or isometric to a sphere. Moreover, a $(k,\mu)'$-almost Kenmotsu manifold admitting a HPCV field $V$ such that $\phi V \neq 0$ is either locally isometric to $\mathbb{H}^{n+1}(-4)$ $\times$ $\mathbb{R}^n$ or $V$ is an eigenvector of $h'$. Finally, an example is presented.\\

\textbf{Mathematics Subject Classification 2010:} 53D15,
 53C25.\\

\textbf{Keywords:} Almost Kenmotsu manifold, Holomorphically Planar conformal vector field, Almost Kaehler manifold, Totally umbilical submanifolds.\\

\section{ \textbf{Introduction}}
\vspace{0.3 cm}
In the present time, the study of existence of Killing vector fields in Riemannian manifolds is a very interesting topic as they preserves a given metric and determine the degree of symmetry of the manifold. Conformal vector fields whose flow preserves a conformal class of metrics are very important in the study of several kind of almost contact metric manifolds. \\

A smooth vector field $V$ on a Riemannian manifold $(M,g)$ is said to be conformal vector field if there exist a smooth function $f$ on $M$ such that
\bea
\pounds_V g = 2fg, \label{1.1}
\eea
 where $\pounds_V g$ is the Lie derivative of $g$ with respect to $V$. The vector field $V$ is called homothetic or Killing accordingly as $f$ is constant or zero. Moreover, $V$ is said to be closed conformal vector field if the metrically equivalent $1$-form of $V$ is closed. If the conformal vector field $V$ is gradient of some smooth function $\lambda$, then $V$ is called gradient conformal vector field. The geometry of conformal vector fields have been investigated in (\cite{deshmukh}, \cite{deshmukh6}).

  A vector field $V$ on a contact metric manifold $M^{2n+1}(\phi,\xi,\eta,g)$ is said to be holomorphically planar conformal vector field if it satisfies
 \bea
 \nabla_X V = aX + b\phi X \label{1.2}
 \eea
 for any vector field $X$, where $a$, $b$ are smooth functions on $M$. As a generalization of closed conformal vector fields Sharma \cite{sharma} introduced the notion of holomorphically planar conformal vector ( in short, HPCV ) fields on almost Hermitian manifold. In \cite{ghosh1}, Ghosh and Sharma characterize an almost Hermitian manifolds admitting a HPCV field. They shows that if $V$ is strictly non-geodesic non-vanishing HPCV field on an almost Hermitian manifold, then $V$ is homothetic and almost analytic. Further Sharma \cite{sharma1} shows that among all complete and simply connected $K$-contact manifolds only the unit sphere admits a non-Killing HPCV field and a $(k,\mu)$-contact manifold admitting a non-zero HPCV field is either Sasakian or locally isometric to $E^3$ or $E^{n+1} \times S^n(4)$. In \cite{ghosh}, Ghosh studied HPCV fields in the framework of contact metric manifolds under certain conditions and proved that a contact metric manifold with pointwise constant $\xi$-sectional curvature admitting a non-closed HPCV field $V$ is either $K$-contact or $V$ is homothetic.\\

 Motivated by the above studies we consider HPCV fields in the framework of a special type of almost contact metric manifolds, called almost Kenmotsu manifolds. The paper is organized as follows : 
 
 In section 2, we present some preliminary notions on almost Kenmotsu manifolds existing in the literature. Section 3 deals with HPCV fields on almost Kenmotsu manidolds and section 4 is associated to the study of HPCV fields on $(k,\mu)'$-almost Kenmotsu manifolds.

\section{\textbf{Preliminaries}}
\vspace{0.3 cm}
An almost contact structure on a $(2n+1)$-dimensional smooth manifold $M^{2n+1}$ is a triplet $(\phi, \xi, \eta)$, where $\phi$ is a $(1,1)$-tensor, $\xi$ is a global vector field  and $\eta$ is a $1$-form satisfying (\cite{bl}, \cite{bll}),
 \be
\phi^{2}=-I+\eta\otimes\xi,\;\; \eta(\xi)=1, \label{2.1}
\ee
where $I$ denote the identity endomorphism. Here also $\phi\xi=0$ and $\eta\circ\phi=0$ hold; both can be derived from (\ref{2.1}) easily. \\
If a manifold $M$ with a $(\phi, \xi, \eta)$-structure admits a Riemannian metric $g$ such that
\be
\nonumber g(\phi X,\phi Y)=g(X,Y)-\eta(X)\eta(Y),
\ee
for any vector fields $X$, $Y$ on $M^{2n+1}$, then $M^{2n+1}$ is said to be an almost contact metric manifold. The fundamental 2-form $\Phi$ on an almost contact metric manifold is defined by $$\Phi(X,Y)=g(X,\phi Y)$$ for any vector fields $X$, $Y$ on $M^{2n+1}$. The condition for an almost contact metric manifold being normal is equivalent to vanishing of the $(1,2)$-type torsion tensor $N_{\phi}$, defined by $$N_{\phi}=[\phi,\phi]+2d\eta\otimes\xi,$$ where $[\phi,\phi]$ is the Nijenhuis tensor of $\phi$ \cite{bl}. Recently in (\cite{dp}, \cite{dp1}, \cite{dp2}), almost contact metric manifold such that $\eta$ is closed and $d\Phi=2\eta\wedge\Phi$ are studied and they are called almost Kenmotsu manifolds. For more details on almost Kenmotsu manifolds we refer the reader to go through the references (\cite{ucd}, \cite{dp1}, \cite{dp2}). Obviously, a normal almost Kenmotsu manifold is a Kenmotsu manifold. Also Kenmotsu manifolds can be characterized by 
\bea
\nonumber (\nabla_{X}\phi)Y=g(\phi X,Y)\xi-\eta(Y)\phi X,
\eea
 for any vector fields $X$, $Y$. Let the distribution orthogonal to $\xi$ is denoted by $\mathcal{D}$, then $\mathcal{D} = Im(\phi) = Ker(\eta)$. Since $\eta$ is closed, $\mathcal{D}$ is an integrable distribution.\\
 
 The study of nullity distributions is a very interesting topic on almost contact metric manifolds. The notion of $k$-nullity distribution was introduced by Gray \cite{gr} and Tanno \cite{ta} in the study of Riemannian manifolds. Blair, Koufogiorgos and Papantonio \cite{bkp} introduced the generalized notion of the $k$-nullity distribution, named the $(k,\mu)$-nullity distribution on a contact metric manifold.  In \cite{dp}, Dileo and Pastore introduce the notion of $(k,\mu)'$-nullity distribution, another generalized notion of the $k$-nullity distribution, on an almost  Kenmotsu manifold  $(M^{2n+1}, \phi, \xi, \eta, g)$, which is defined for any $p \in M^{2n+1}$ and $k,\mu \in \mathbb {R}$ as follows:
\bea
 N_{p}(k,\mu)'=\{Z\in T_{p}M:R(X,Y)Z&=&k[g(Y,Z)X-g(X,Z)Y]\nonumber\\&&+\mu[g(Y,Z)h'X-g(X,Z)h'Y]\},\label{2s1.7}
\eea
where $h'=h\circ\phi$.\\

 Let $M^{2n+1}$ be an almost Kenmotsu manifold with structure $(\phi,\xi,\eta,g)$. The Levi-Civita connection satisfies $\nabla_\xi \xi = 0$ and $\nabla_\xi \phi = 0$. We denote by $h=\frac{1}{2}\pounds_{\xi}\phi$ and $l=R(\cdot, \xi)\xi$ on $M^{2n+1}$. The tensor fields $l$ and $h$ are symmetric operators and satisfy the following relations \cite{pv}:
 \bea
h\xi=0,\;l\xi=0,\;tr(h)=0,\;tr(h\phi)=0,\;h\phi+\phi h=0, \label{2.2}
 \eea
 We also have the following formulas given in (\cite{dp} - \cite{dp2})
\bea
\nabla_{X}\xi=X - \eta(X)\xi - \phi hX, \label{2.3}
\eea
\be
R(X,Y)\xi = \eta(X)(Y - \phi hY) - \eta(Y)(X - \phi hX) + (\nabla_Y \phi h)X - (\nabla_X \phi h)Y, \label{2.4}
\ee
\bea
(\nabla_X \phi)Y - (\nabla_{\phi X} \phi)\phi Y = -\eta(Y)\phi X - 2g(X,\phi Y)\xi - \eta(Y)hX \label{2.5}
\eea
for any $X,\;Y$ on $M^{2n+1}$.
The $(1,1)$-type symmetric tensor field $h'=h\circ\phi$ is anticommuting
with $\phi$ and $h'\xi=0$. Also it is clear that (\cite{dp}, \cite{waa})\be\label{2.7} h=0\Leftrightarrow
h'=0,\;\;h'^{2}=(k+1)\phi^2(\Leftrightarrow h^{2}=(k+1)\phi^2). \ee

\section{\textbf{HPCV fields on almost Kenmotsu manifolds}}
In this section we characterize almost Kenmotsu manifolds admitting a holomorphically planar conformal vector field $V$. Before proving our main theorems we first state and prove the following lemma.

\begin{lemma} \label{lem3.1}
Let $M^{2n+1}$ be an almost Kenmotsu manifold admitting a HPCV field $V$. Then the following relation
\bea
\nonumber \phi Va = 4nb\eta(V) + (\xi b)\eta(V) - Vb
\eea
holds on $M^{2n+1}$.
\end{lemma}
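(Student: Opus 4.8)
The starting point is the defining relation (\ref{1.2}). The plan is to differentiate it once more and feed the result into the Ricci identity for $V$. Writing $\nabla^{2}_{X,Y}V=\nabla_X\nabla_Y V-\nabla_{\nabla_XY}V$ and using (\ref{1.2}) twice, the first-order terms in $\nabla_XY$ cancel and one is left with $\nabla^{2}_{X,Y}V=(Xa)Y+(Xb)\phi Y+b(\nabla_X\phi)Y$; antisymmetrising in $X,Y$ gives the master identity
\[
R(X,Y)V=(Xa)Y-(Ya)X+(Xb)\phi Y-(Yb)\phi X+b\big[(\nabla_X\phi)Y-(\nabla_Y\phi)X\big].
\]
Every subsequent step is a contraction of this formula, so deriving it cleanly is the first task.

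Since the assertion is a scalar relation that is linear in $V$ and carries the factor $4n$, I would extract it by contracting the master identity over a $\phi$-twisted orthonormal frame $\{e_i\}$, that is, by computing $\sum_i g(R(e_i,V)V,\phi e_i)$ (equivalently, set $Y=V$ and pair the third slot against $\phi e_i$). The gradient terms reproduce $\phi V a$ and $Vb$ after using $\sum_i (e_i f)\,g(Z,e_i)=Zf$ together with $g(\phi V,\phi e_i)=g(V,e_i)-\eta(V)\eta(e_i)$, while the term $-(Vb)\sum_i g(\phi e_i,\phi e_i)$ contributes $-2n\,Vb$ because $\sum_i g(\phi^2 e_i,e_i)=-2n$ by (\ref{2.1}); the dimension $2n$ of the distribution $\mathcal D$ is the source of the eventual $4n$.

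Two auxiliary computations feed into this contraction. First, the $\nabla\phi$-traces $\sum_i g((\nabla_{e_i}\phi)V,\phi e_i)$ and $\sum_i g((\nabla_V\phi)e_i,\phi e_i)$ have to be evaluated. Using $\nabla_\xi\phi=0$, the anticommuting relations (\ref{2.2}), the structure equation (\ref{2.5}), and the identity $g((\nabla_X\phi)Y,\xi)=g(\phi X+hX,Y)$ (which follows from (\ref{2.3}) after a short computation), these reduce to multiples of $\eta$. The key simplification is that $\sum_i(\nabla_{e_i}\phi)e_i=0$ on any almost Kenmotsu manifold: indeed $d\eta=0$ and $d\Phi=2\eta\wedge\Phi$ force $d\!\star\!\Phi=0$ and hence $\delta\Phi=0$, which is exactly what makes the divergence-of-$\phi$ contributions disappear. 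Second, the genuine curvature contraction left over from the master identity must be identified; here I would use the first Bianchi identity together with the pair symmetry of $R$ to trade it for contractions of $R(\cdot,\cdot)\xi$, which are known explicitly from (\ref{2.4}).

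The hard part is precisely this last step: after the frame trace one is not left with an ordinary Ricci contraction that can simply be read off, but with a $\phi$-weighted curvature term, and one must show that all the $h$- and $(\nabla_X\phi h)$-contributions coming from (\ref{2.4}) cancel — using $tr\,h=tr(h\phi)=0$, $h\xi=0$ and $h'=h\phi$ — so that the surviving identity is $h$-free and collapses to $\phi V a=4nb\,\eta(V)+(\xi b)\eta(V)-Vb$. Controlling this cancellation, and keeping the bookkeeping of the several $\eta(V)$-terms consistent, is where the real work lies; the vanishing of $\delta\Phi$ is the structural fact that lets the coefficient $4n$ emerge cleanly rather than being contaminated by a leftover $\sum_i(\nabla_{e_i}\phi)e_i$ term.
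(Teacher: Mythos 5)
Your derivation of the master identity coincides with the paper's equation (3.4), but the contraction you build on it cannot close. The relation in the Lemma is curvature-free, and the only algebraic symmetry of $R$ that is not already built into (3.4) --- hence the only source of curvature-free information --- is the skew-adjointness $g(R(X,Y)V,V)=0$. The paper's proof exploits exactly this: it adds to (3.4) its $\phi$-twisted version, uses the structure identity (2.5) to replace all $\nabla\phi$-terms by algebraic ones, and then takes the inner product with $V$ itself, so that every curvature term vanishes identically before any trace is taken; after the substitutions $X\to\phi X$, $Y\to\phi Y$ a single contraction gives the stated relation. (Incidentally, the coefficient $4n$ there arises from tracing the $2g(X,\phi Y)\xi$-terms of (2.5) against $\eta(V)$, not from $\sum_i g(\phi e_i,\phi e_i)=2n$ hitting $Vb$; indeed your own bookkeeping produces $-2n\,Vb$, whereas the Lemma has coefficient $-1$ on $Vb$ --- a discrepancy that would have to be absorbed by the curvature term you cannot compute.) Your trace $\sum_i g(R(e_i,V)V,\phi e_i)$ never pairs the curvature output against $V$, so it leaves a genuine $\ast$-Ricci-type curvature scalar that must be evaluated independently, and that is where the proposal breaks.

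The step you propose for that evaluation --- using the first Bianchi identity and pair symmetry to trade $\sum_i g(R(e_i,V)V,\phi e_i)$ for contractions of $R(\cdot,\cdot)\xi$ known from (2.4) --- is impossible. That trace contains curvature components all four of whose arguments lie in $\mathcal{D}$, and these are not determined by the almost Kenmotsu structure: in the $h=0$ case, every warped product $\mathbb{R}\times_{e^t}N$ with $N$ almost Kaehler has the same $R(\cdot,\cdot)\xi$, namely $R(X,Y)\xi=\eta(X)Y-\eta(Y)X$, while your trace varies with the $\ast$-Ricci curvature of the fibre $N$; so no identity of the kind you want can exist. Moreover, first Bianchi applied to $R(e_i,V)V$ is vacuous (the middle term has a repeated argument), and pair symmetry merely rewrites the trace as $\sum_i g(R(V,\phi e_i)e_i,V)$, i.e.\ in terms of itself. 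What pair symmetry \emph{can} legitimately do is supply a second evaluation of the same trace from (3.4), since pair symmetry encodes the skew-adjointness in the last two slots --- but that is just a roundabout version of the paper's pairing-with-$V$ argument, and it would still force you to evaluate traces such as $\sum_i g((\nabla_{e_i}\phi)V,\phi e_i)$, which your (correct, but beside the point) observation $\delta\Phi=0$ does not handle: that kills $\sum_i(\nabla_{e_i}\phi)e_i$, a different contraction. Finally, the "hard cancellation" of $h$- and $(\nabla\phi h)$-terms from (2.4) that you anticipate is not where the difficulty lies: (2.4) plays no role in a correct proof of this Lemma.
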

\textbf{Proof:} Differentiating (\ref{1.2}) covariantly along any vector field $Y$, we have
\bea
\nabla_Y \nabla_X V = a(\nabla_Y X) + (Ya)X + b(\nabla_Y \phi X) + (Yb)\phi X. \label{3.1}
\eea
Interchanging $X$ and $Y$ in the above equation, we get
\bea
\nabla_X \nabla_Y V = a(\nabla_X Y) + (Xa)Y + b(\nabla_X \phi Y) + (Xb)\phi Y. \label{3.2}
\eea
Replacing $X$ by $[X,Y]$ in (\ref{1.2}) yields
\bea
\nabla_{[X,Y]} V = a(\nabla_X Y) - a(\nabla_Y X) + b\phi(\nabla_X Y) - b\phi(\nabla_Y X). \label{3.3}
\eea
Now using $R(X,Y)Z = [\nabla_X,\nabla_Y]Z - \nabla_{[X,Y]}Z$ gives
\bea
\nonumber R(X,Y)V &=& (Xa)Y - (Ya)X + (Xb)\phi Y - (Yb)\phi X \\ && + b[(\nabla_X \phi)Y - (\nabla_Y \phi)X]. \label{3.4}
\eea
Putting $X = \phi X$ and $Y = \phi Y$ in (\ref{3.4}) we get
\bea
\nonumber R(\phi X,\phi Y)V &=& (\phi Xa)\phi Y - (\phi Ya)\phi X + (\phi Xb)[- Y + \eta(Y)\xi] \\ &&- (\phi Yb)[- X + \eta(X)\xi] + b[(\nabla_{\phi X} \phi)\phi Y - (\nabla_{\phi Y} \phi)\phi X]. \label{3.5}
\eea
Now adding equations (\ref{3.4}) and (\ref{3.5}) and using (\ref{2.5}) we have
\bea
\nonumber R(X,Y)V + R(\phi X,\phi Y)V &=& (Xa)Y - (Ya)X + (Xb)\phi Y - (Yb)\phi X \\ \nonumber && + (\phi Xa)\phi Y - (\phi Ya)\phi X - (\phi Xb)Y \\ \nonumber &&+ (\phi Xb)\eta(Y)\xi + (\phi Yb)X - (\phi Yb)\eta(X)\xi \\ \nonumber &&+ b[-\eta(Y)\phi X - 2g(X,\phi Y)\xi - \eta(Y)hX \\ && + \eta(X)\phi Y + 2g(\phi X,Y)\xi + \eta(X)hY]. \label{3.6}
\eea
Taking inner product of (\ref{3.6}) with $V$ and then substituting  $X = \phi X$ and $Y = \phi Y$ yields
\bea
\nonumber && (\phi Xa)g(\phi Y,V) - (\phi Ya)g(\phi X,V) + (\phi Xb)[- g(Y,V) + \eta(Y)\eta(V)] \\ \nonumber &&- (\phi Yb)[- g(X,V) + \eta(X)\eta(V)] + [- X + \eta(X)\xi](a)[- g(Y,V) + \eta(Y)\eta(V)]\\ \nonumber && - [- Y + \eta(Y)\xi](a)[- g(X,V) + \eta(X)\eta(V)] - [- X + \eta(X)\xi](b)g(\phi Y,V) \\ && + [- Y + \eta(Y)\xi](b)g(\phi X,V) - 4bg(X,\phi Y)\eta(V) = 0. \label{3.7}
\eea
Now replacing $Y$ by $\phi Y$ in the foregoing equation we obtain
\bea
\nonumber && - g(\phi Da,X)[- g(Y,V) + \eta(Y)\eta(V)] + g(Da,Y)g(\phi X,V) - \eta(Y)(\xi a)g(\phi X,V) \\ \nonumber && + g(\phi Db,X)g(Y,V) + g(Db,Y)[- g(X,V) + \eta(X)\eta(V)] -\eta(Y)(\xi b)[- g(X,V) \\ \nonumber && + \eta(X)\eta(V)] + g(Da,X)g(Y,V) - \eta(X)(\xi a)g(Y,V) + g(Da,Y)[- g(X,V) \\ \nonumber && + \eta(X)\eta(V)] + g(Db,X)[- g(Y,V) + \eta(Y)\eta(V)] - \eta(X)(\xi b)[- g(Y,V) \\ && + \eta(Y)\eta(V)] - g(Db,Y)g(\phi X,V) + 4bg(X,Y)\eta(V) - 4b\eta(X)\eta(Y)\eta(V) = 0. \label{3.8}
\eea
Contracting $X$ and $Y$ in (\ref{3.8}) we have
\bea
\nonumber - 2\phi Va - 2Vb + 2(\xi b)\eta(V) + 8nb\eta(V) = 0,
\eea
which implies
\bea
 \phi Va = 4nb\eta(V) + (\xi b)\eta(V) - Vb. \label{3.9}
\eea
This completes the proof.\\

\begin{theorem} \label{th1.1}
If an almost Kenmotsu manifold $M^{2n+1}$ admits a non-zero HPCV field $V$ such that $\phi V = 0$, then $M^{2n+1}$ is locally a warped product of an almost Kaehler manifold and an open interval.
\end{theorem}

\begin{proof}
Let $M^{2n+1}$ be an almost Kenmotsu manifold admitting a non-zero HPCV field $V$ such that $\phi V = 0$. Operating $\phi$ on it we get
\bea
V = \eta(V)\xi. \label{3.10}
\eea
Now using (\ref{3.10}) and $\phi V = 0$ in Lemma \ref{lem3.1} we have $4nb\eta(V) = 0$, which implies either $b = 0$ or $\eta(V) = 0$. If $\eta(V) = 0$, then from (\ref{3.10}) we have $V = 0$, which is a contradiction to our hypoyhesis. Thus we get $b = 0$.\\
 Differentiating (\ref{3.10}) covariantly along any vector field $X$ and using $b = 0$, $\phi V = 0$, (\ref{1.2}) and (\ref{2.3}) we obtain
 \bea
 aX = a\eta(X)\xi + g(X,V)\xi - 2\eta(X)\eta(V)\xi + \eta(V)X - \eta(V)\phi hX. \label{3.11}
 \eea
Contracting $X$ and using (\ref{2.2}) in (\ref{3.11}) yields $a = \eta(V)$. Substituting the value of $a$ in (\ref{3.11}) we get
\bea
g(X,V)\xi - \eta(X)\eta(V)\xi - \eta(V)\phi hX = 0. \label{3.12}
\eea
Replacing $X$ by $\phi X$ in the above equation and using the hypothesis $\phi V = 0$ and $\eta(V) \neq 0$ we infer that $hX = 0$ for any vector field $X$ on $M^{2n+1}$. The rest of the proof follows from Theorem 2 of \cite{dp2}.
\end{proof}

Proposition 1 of \cite{dp2} says that " In an almost Kenmotsu manifold $M^{2n+1}$, the integral manifolds of $\mathcal{D}$ are totally umbilical submanifolds of $M^{2n+1}$ if and only if $h$ vanishes ". Hence, we can state the following: 
\begin{corollary} \label{cor1.1}
Let $M^{2n+1}$ be an almost Kenmotsu manifold admitting a non-zero HPCV field $V$ such that $\phi V = 0$. Then the integral manifolds of $\mathcal{D}$ are totally umbilical submanifolds of $M^{2n+1}$.
\end{corollary}

\begin{corollary} \label{cor1.2}
If a locally symmetric almost Kenmotsu manifold $M^{2n+1}$ admits a non-zero HPCV field $V$ such that $\phi V = 0$, then $M^{2n+1}$ is a Kenmotsu manifold.
\end{corollary}
The above Corollary follows directly from Theorem 3 of \cite{dp2}.\\

 Proposition 2.1 of \cite{wang2} states that " Any 3-dimensional almost Kenmotsu manifold is Kenmotsu if and only if $h$ vanishes ". Thus we arrive to the following:
 \begin{corollary} \label{cor1.3}
A $3$-dimensional almost Kenmotsu manifold $M^{2n+1}$ admitting a non-zero HPCV field $V$ such that $\phi V = 0$ is a Kenmotsu manifold.
\end{corollary}

\begin{theorem} \label{th1.2}
 Let $M^{2n+1}$ be a complete almost Kenmotsu manifold  admitting a non-zero HPCV field $V$. If $M^{2n+1}$ has positive constant $\xi$-sectional curvature, then either $M^{2n+1}$ is locally a warped product of an almost Kaehler manifold and an open interval or isometric to a sphere.
\end{theorem}
\begin{proof}
If the sectional curvature $K(\xi,X) = c$ of an almost Kenmotsu manifold is a positive constant, then we can easily obtain the following:
\bea
R(\xi,X)\xi = -c[X - \eta(X)\xi]. \label{3.13}
\eea
Now putting $X = \xi$ in (\ref{3.4}) we have
\bea
R(\xi,Y)V = (\xi a)Y - (Ya)\xi + (\xi b)\phi Y + b\phi Y + bhY. \label{3.14}
\eea
Taking inner product of (\ref{3.14}) with $\xi$ we get
\bea
g(R(\xi,Y)V,\xi) = (\xi a)\eta(Y) - (Ya). \label{3.15}
\eea
Again using (\ref{3.13}) we have
\bea
g(R(\xi,Y)V,\xi) = - g(R(\xi,Y)\xi,V) = c[g(Y,V) - \eta(Y)\eta(V)]. \label{3.16}
\eea
Hence from (\ref{3.15}) and (\ref{3.16}) we obtain
\bea
Da - (\xi a)\xi + cV - c\eta(V)\xi = 0. \label{3.17}
\eea
Taking inner product of (\ref{3.14}) with $V$ we get
\bea
(\xi a)V - \eta(V)(Da) - (\xi b)\phi V - b\phi V + bhV = 0. \label{3.18}
\eea
Eliminating $Da$ from (\ref{3.17}) and (\ref{3.18}) we have
\bea
- (\xi a)\phi^2 V - c\eta(V)\phi^2 V - (\xi b)\phi V - b\phi V + bhV = 0. \label{3.19}
\eea
Now differentiating (\ref{3.17}) covariantly along any vector field $X$ and then taking inner product of the resulting equation with $Y$ we infer
\bea
\nonumber && g(\nabla_X Da,Y) - (\xi a)[g(X,Y) - \eta(X)\eta(Y) - g(\phi hX,Y)]  - (X(\xi a))\eta(Y) \\ \nonumber && + c[ag(X,Y) + bg(\phi X,Y)] - c\eta(Y)[g(X,V) - \eta(X)\eta(V) + a\eta(X)] \\ && - c\eta(V)[g(X,Y) - \eta(X)\eta(Y) - g(\phi hX,Y)] = 0. \label{3.20}
\eea
Antisymmetrizing the above equation and using the symmetry of the Hessian operator, that is, $\operatorname{Hess}_a(X,Y) = g(\nabla_X Da,Y) = g(\nabla_Y Da,X)$ we obtain
\bea
\nonumber && (Y(\xi a))\eta(X) - (X(\xi a))\eta(Y) + 2bcg(\phi X,Y) \\ && - c\eta(Y)g(X,V) + c\eta(X)g(Y,V) = 0. \label{3.21}
\eea
Replacing $X$ by $\phi X$ and $Y$ by $\phi Y$ in (\ref{3.21}) we get $2bcg(\phi X,Y) = 0$, which implies $b = 0$ as $c$ is non-zero constant by hypothesis. Then from (\ref{3.18}) we have
\bea
(\xi a)V = (Da)\eta(V). \label{3.22}
\eea
Also from (\ref{3.19}) we obtain
\bea
[(\xi a) + c\eta(V)]\phi^2 V = 0,
\eea
which implies either $\phi^2 V =  0$ or $(\xi a) = - c\eta(V)$.\\
Case 1: If $\phi^2 V =  0$, then we have $V = \eta(V)\xi$ and this implies $\phi V = 0$. Thus from Theorem \ref{th1.1} we infer that $M^{2n+1}$ is locally a warped product of an almost Kaehler manifold and an open interval.\\
Case 2: If $(\xi a) = - c\eta(V)$, then from (\ref{3.22}) we have
\bea
(Da + cV)\eta(V) = 0. \label{3.23}
\eea
Now if $\eta(V) = 0$, then from (\ref{3.22}) we have $\xi a = 0$ as $V$ is non-zero. Hence from (\ref{3.17}) we get $Da = - cV$. Thus in either cases we obtain $Da = - cV$. Differentiating this covariantly along any vector field $X$ and using (\ref{1.2}) we have $\nabla_X Da = - caX$. We are now in a position to apply Obata's theorem \cite{om}: " In order for a complete Riemannian manifold of dimension $n \geq 2$ to admit a non-constant function $\lambda$ with $\nabla_X D\lambda = -c^2 \lambda X$ for any vector $X$, it is necessary and sufficient that the manifold is isometric with a sphere $S^n(c)$ of radius $\frac{1}{c}$ " to conclude that the manifold is isometric  to the sphere $S^{2n+1}(\sqrt{c})$ of radius $\frac{1}{\sqrt{c}}$.
\end{proof}

\section{\textbf{HPCV fields on a class of almost Kenmotsu manifolds}}
In this section, we study HPCV fields on almost Kenmotsu manifolds with $\xi$ belonging to the $(k,\mu)'$-nullity distribution. Let $X\in\mathcal D$ be the eigen vector of $h'$ corresponding to the eigen value $\lambda$. Then from (\ref{2.7}) it is clear that $\lambda^{2} = -(k + 1)$, a constant. Therefore $k \leq -1$ and $\lambda = \pm \sqrt{- k - 1}$. We denote by $[\lambda]'$ and $[-\lambda]'$ the corresponding eigenspaces related to the non-zero eigenvalue $\lambda$ and $-\lambda$ of $h'$, respectively. Before proving our main theorem in this section we recall some results:
\begin{lemma} \label{lem4.1}(Prop. $4.1$ of \cite{dp})
 Let $(M^{2n+1},\phi,\xi,\eta,g)$ be an almost Kenmotsu manifold such that $\xi$ belongs to the $(k,\mu)'$-nullity distribution and $h'\neq0$. Then $k<-1,\;\mu=-2$ and Spec $(h')=\{0,\lambda,-\lambda\}$, with $0$ as simple eigen value and $\lambda=\sqrt{-k-1}$. The distributions $[\xi]\oplus[\lambda]'$ and $[\xi]\oplus[-\lambda]'$ are integrable with totally geodesic leaves. The distributions $[\lambda]'$ and $[-\lambda]'$ are integrable with totally umbilical leaves. Furthermore, the sectional curvature are given by the following:
\begin{itemize}
  \item[(a)]$K(X,\xi)=k-2\lambda$ if $X\in[\lambda]'$ and\\$K(X,\xi)=k+2\lambda$ if $X\in[-\lambda]'$,
  \item[(b)] $K(X,Y)=k-2\lambda$ if $X,Y\in[\lambda]'$;\\$K(X,Y)=k+2\lambda$ if $X,Y\in[-\lambda]'$ and \\$K(X,Y)=-(k+2)$ if $X\in[\lambda]'$, $Y\in[-\lambda]'$,
  \item[(c)] $M^{2n+1}$ has constant negative scalar curvature $r=2n(k-2n).$
\end{itemize}
\end{lemma}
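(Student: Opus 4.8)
The plan is to establish the statement in four stages: first the spectrum of $h'$, then the value of $\mu$, then the geometry of the eigendistributions, and finally the curvatures. I would start from the algebraic identity $h'^2=(k+1)\phi^2$ in (\ref{2.7}). Restricted to $\mathcal D$ this reads $h'^2=-(k+1)I$, and since $h'$ is a symmetric operator its eigenvalues are real; reality therefore forces $k\le-1$, while $h'\neq0$ excludes $k=-1$ (which would give $h'^2=0$ on $\mathcal D$ and hence $h'=0$ by symmetry). Thus $k<-1$ and the nonzero eigenvalues are $\pm\lambda$ with $\lambda=\sqrt{-k-1}$. As $h'$ anticommutes with $\phi$, the map $\phi$ sends $[\lambda]'$ isomorphically onto $[-\lambda]'$, so both have dimension $n$; combined with $h'\xi=0$ this gives $\operatorname{Spec}(h')=\{0,\lambda,-\lambda\}$ with $0$ simple.

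To obtain $\mu=-2$ I would compute $R(X,Y)\xi$ in two ways. The nullity condition (\ref{2s1.7}) with $Z=\xi$ gives $R(X,Y)\xi=k[\eta(Y)X-\eta(X)Y]+\mu[\eta(Y)h'X-\eta(X)h'Y]$, while (\ref{2.4}), using $\phi h=-h'$ (which follows from $h'=h\circ\phi$ and (\ref{2.2})), expresses the same tensor through $h'$ and $\nabla h'$. Taking $X\in[\lambda]'$ and $Y=\xi$ reduces the nullity side to $(k+\mu\lambda)X$ and the side coming from (\ref{2.4}) to $-X-\lambda X-(\nabla_\xi h')X+(\nabla_X h')\xi$. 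Here $(\nabla_X h')\xi=-h'(\nabla_X\xi)=-(1+\lambda)\lambda X$ follows at once from (\ref{2.3}) and $h'\xi=0$, whereas $(\nabla_\xi h')X$ is obtained from $\nabla_\xi\phi=0$ together with the Riccati-type evolution of $h$ along $\xi$; the constancy of $\lambda$ keeps these derivatives under control. Matching coefficients then pins down $\mu=-2$. I expect this to be the principal obstacle, as it is the only place where one genuinely needs the covariant derivative of $h'$ rather than its pointwise algebra.

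For the foliations, (\ref{2.3}) and $\phi h=-h'$ give $\nabla_X\xi=X-\eta(X)\xi+h'X$, so $\nabla_X\xi=(1+\lambda)X$ on $[\lambda]'$ and $(1-\lambda)X$ on $[-\lambda]'$. Computing the $\xi$-, $[\lambda]'$- and $[-\lambda]'$-parts of $\nabla_UW$ by the Koszul formula and using the eigenspace splitting, I would verify that $[\xi]\oplus[\lambda]'$ and $[\xi]\oplus[-\lambda]'$ are $\nabla$-invariant, hence integrable with totally geodesic leaves. For $X,Y\in[\lambda]'$ the resulting $\nabla_XY$ then has no $[-\lambda]'$-component, while its $\xi$-component equals $-g(Y,\nabla_X\xi)\xi=-(1+\lambda)g(X,Y)\xi$; since this is proportional to $g(X,Y)$, the leaves of $[\lambda]'$ are totally umbilical, and symmetrically for $[-\lambda]'$.

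The sectional curvatures in (a) fall straight out of the nullity condition with $\mu=-2$: for unit $X\in[\lambda]'$, $R(X,\xi)\xi=(k+\mu\lambda)X=(k-2\lambda)X$, whence $K(X,\xi)=k-2\lambda$, and likewise $k+2\lambda$ on $[-\lambda]'$. The mixed and intra-eigenspace curvatures of (b) require the full restriction of $R$ to $\mathcal D$, which I would assemble from the connection coefficients found in the previous step together with the totally geodesic and umbilical data; this bookkeeping is the second laborious point. Finally (c) is immediate once (a) and (b) hold: summing the sectional curvatures over an adapted orthonormal frame $\{\xi,e_1,\dots,e_n,f_1,\dots,f_n\}$ with $e_i\in[\lambda]'$ and $f_j\in[-\lambda]'$ yields $r=2nk-4n^2=2n(k-2n)$, which is negative because $k<-1$.
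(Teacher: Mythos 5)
Since the paper itself offers no proof of this lemma (it is quoted verbatim as Prop.~4.1 of \cite{dp}), your sketch has to stand on its own, and its stages are of unequal strength. Stage 1 is correct and complete. Stage 2 is correct in outline: comparing the nullity value of $R(X,\xi)\xi$ with (\ref{2.4}) does reduce everything to $(\nabla_\xi h')X=-(2+\mu)h'X$ on $\mathcal D$, and the constancy of $\lambda$ is indeed what finishes it --- most cleanly, differentiate $h'^2=(k+1)\phi^2$ along $\xi$ using $\nabla_\xi\phi=0$ and $k$ constant to get $(\nabla_\xi h')h'+h'(\nabla_\xi h')=0$, hence $-2(2+\mu)h'^2=0$ and $\mu=-2$ since $h'^2\neq0$; your appeal to a ``Riccati-type evolution'' should be replaced by this explicit step, but there is no real gap here. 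Likewise (a) follows at once from the nullity condition, and your frame summation for (c) is fine once (a) and (b) are in hand.

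The genuine gap is in stages 3 and 4(b): as described, neither step ever uses the nullity hypothesis, and no curvature-free argument can possibly work, since on a general almost Kenmotsu manifold with $h'\neq0$ the eigendistributions of $h'$ are not totally geodesic (nor need they be integrable). The Koszul formula and the eigenspace splitting carry no curvature information, so they cannot show that $[\xi]\oplus[\pm\lambda]'$ is $\nabla$-invariant. What actually forces this is the Codazzi-type symmetry $(\nabla_Xh')Y=(\nabla_Yh')X$ for $X,Y\perp\xi$, which is exactly the statement $R(X,Y)\xi=0$ on $\mathcal D$ read through (\ref{2.4}); applied to a mixed pair $X\in[\lambda]'$, $Z\in[-\lambda]'$ (with $\lambda$ constant) it gives $-(\lambda I+h')\nabla_XZ=(\lambda I-h')\nabla_ZX$, and comparing eigencomponents shows $\nabla_XZ$ has no $[\lambda]'$-part, whence $g(\nabla_XY,Z)=-g(Y,\nabla_XZ)=0$ for $X,Y\in[\lambda]'$; combined with $\nabla_X\xi=(1+\lambda)X$ and $\nabla_\xi h'=0$ this yields all the geodesic and umbilical claims. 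This identity (or the explicit formula of Lemma \ref{lem4.2}) is the key lemma your plan is missing. The same omission breaks 4(b): the connection components produced by stage 3 are only the foliation data, which leave the Christoffel symbols tangent to a leaf of $[\lambda]'$ entirely undetermined --- a warped product of an interval with a \emph{non-flat} fibre, warping function $e^{(1+\lambda)t}$, has the identical foliation structure and satisfies (a), yet has different intra-leaf curvature $K(X,Y)$. To pin down $K(X,Y)=k\mp2\lambda$ and $-(k+2)$ one must again feed the nullity condition in, by differentiating the $\nabla h'$ formula of Lemma \ref{lem4.2} and using the Ricci/second Bianchi identities; this is precisely the content of Lemma \ref{lem4.3} (Prop.~4.2 of \cite{dp}), which your proposal never reconstructs.
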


\begin{lemma} \label{lem4.2} (Lemma $4.1$ of \cite{dp})
Let $(M^{2n+1},\phi,\xi,\eta,g)$ be an almost Kenmotsu manifold with  $h'\neq0$ and $\xi$ belongs to the $(k,-2)'$-nullity distribution. Then, for any $X,\; Y \in \chi(M^{2n+1})$,
\bea
(\nabla_X h')Y = - g(h'X + h'^2 X,Y)\xi - \eta(Y)(h'X + h'^2 X) \label{4.1}
\eea
\end{lemma}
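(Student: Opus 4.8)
The plan is to pin down the $(1,2)$-tensor $(\nabla_X h')Y$ component by component, exploiting that $h'$ is symmetric and anticommutes with $\phi$, that $h'\xi=0$ with $h'^2=(k+1)\phi^2$ by (\ref{2.7}), and the eigenspace/leaf structure furnished by Lemma \ref{lem4.1}. First I would extract an antisymmetric relation. Since $h'=h\circ\phi$ and $h\phi+\phi h=0$ give $\phi h=-h'$, the identity (\ref{2.3}) reads $\nabla_X\xi=X-\eta(X)\xi+h'X$. Substituting $\phi h=-h'$ into the general curvature formula (\ref{2.4}) expresses $R(X,Y)\xi$ through $(\nabla_X h')Y-(\nabla_Y h')X$, while setting $Z=\xi$ and $\mu=-2$ in the $(k,\mu)'$-nullity condition (\ref{2s1.7}) gives a second expression for $R(X,Y)\xi$. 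Equating the two yields
\[
(\nabla_X h')Y-(\nabla_Y h')X=(k+1)[\eta(Y)X-\eta(X)Y]-[\eta(Y)h'X-\eta(X)h'Y].
\]
Differentiating $h'\xi=0$ and using $\nabla_X\xi=X-\eta(X)\xi+h'X$ together with $h'\xi=0$ then gives $(\nabla_X h')\xi=-(h'X+h'^2X)$, which already settles the case $Y=\xi$ of the claim.

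Next I would observe that $(\nabla_X h')$ is itself a symmetric operator (differentiate $g(h'Y,Z)=g(Y,h'Z)$ and use $\nabla g=0$). Symmetry converts the value at $\xi$ into the $\xi$-component of the general tensor, namely $g((\nabla_X h')Y,\xi)=g((\nabla_X h')\xi,Y)=-g(h'X+h'^2X,Y)$, which is exactly the coefficient of $\xi$ in the target (\ref{4.1}). It then remains to show that the $\mathcal{D}$-component equals $-\eta(Y)(h'X+h'^2X)$. Since $h'X+h'^2X\in\mathcal{D}$ and annihilates $\xi$, by bilinearity and the already-known actions on $\xi$ this reduces to proving that $(\nabla_X h')Y$ is proportional to $\xi$ whenever $X,Y\in\mathcal{D}$, i.e. that $g((\nabla_X h')Y,Z)=0$ for all $X,Y,Z\in\mathcal{D}$.

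This vanishing is the heart of the matter. Decomposing $X,Y,Z$ into eigenvectors of $h'$ with eigenvalues in $\{\lambda,-\lambda\}$ and using that $\lambda=\sqrt{-k-1}$ is constant and $h'$ is symmetric, a short computation gives $g((\nabla_X h')Y,Z)=(\lambda_Y-\lambda_Z)\,g(\nabla_X Y,Z)$, where $\lambda_Y,\lambda_Z$ denote the eigenvalues attached to $Y,Z$. When $\lambda_Y=\lambda_Z$ the prefactor kills the term; when $\lambda_Y\neq\lambda_Z$ I would invoke Lemma \ref{lem4.1}, according to which the distributions $[\xi]\oplus[\lambda]'$ and $[\xi]\oplus[-\lambda]'$ have totally geodesic leaves. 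This forces $g(\nabla_X Y,Z)=0$ in every mixed case: directly when $X,Y$ lie in a common totally geodesic distribution, and after the metric-compatibility rewrite $g(\nabla_X Y,Z)=-g(Y,\nabla_X Z)$ when instead $X,Z$ do. This is precisely where genuine geometric input, rather than pure tensor algebra, enters, and I expect it to be the main obstacle.

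Finally, having established $(\nabla_X h')Y=-g(h'X+h'^2X,Y)\xi$ for $X,Y\in\mathcal{D}$, I would compute $(\nabla_\xi h')Y$ for $Y\in\mathcal{D}$ from the antisymmetric relation above combined with $h'^2=(k+1)\phi^2$ (it turns out to vanish), and reassemble the four contributions via the splittings $X=X_{\mathcal{D}}+\eta(X)\xi$ and $Y=Y_{\mathcal{D}}+\eta(Y)\xi$, using $A\xi=h'\xi+h'^2\xi=0$ and $h'X+h'^2X\in\mathcal{D}$, to recover (\ref{4.1}) in full generality.
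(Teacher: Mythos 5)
Your proof is essentially correct, but note that the paper itself never proves this statement at all: it is imported verbatim as Lemma 4.1 of \cite{dp} (Dileo--Pastore), so the only route to compare against is the original source, not anything internal to this paper. Your reconstruction checks out step by step: equating (\ref{2.4}) (with $\phi h=-h'$) against the nullity expression (\ref{4.2}) with $\mu=-2$ does give $(\nabla_X h')Y-(\nabla_Y h')X=(k+1)[\eta(Y)X-\eta(X)Y]-[\eta(Y)h'X-\eta(X)h'Y]$; differentiating $h'\xi=0$ gives $(\nabla_X h')\xi=-(h'X+h'^2X)$; operator symmetry of $\nabla_X h'$ yields the $\xi$-component; your factorization $g((\nabla_X h')Y,Z)=(\lambda_Y-\lambda_Z)g(\nabla_X Y,Z)$ for local eigenfields with constant eigenvalues is valid; the computation $(\nabla_\xi h')Y=-(k+1)Y-h'^2Y=0$ on $\mathcal{D}$ via (\ref{2.7}) is right; and the final reassembly recovers (\ref{4.1}). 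The one point worth flagging is your appeal, in the mixed-eigenvalue cases, to the totally geodesic leaves of $[\xi]\oplus[\lambda]'$ and $[\xi]\oplus[-\lambda]'$ from Lemma \ref{lem4.1}: within this paper that is legitimate (both lemmas are quoted independently from \cite{dp}), but in the original development those leaf properties and the formula (\ref{4.1}) are intertwined, so as a from-scratch proof this step is the weakest link. Happily, your own machinery removes the need for it: for $X,Y\in\mathcal{D}$ your Codazzi relation gives $(\nabla_X h')Y=(\nabla_Y h')X$, which together with the operator symmetry makes the trilinear form $T(X,Y,Z)=g((\nabla_X h')Y,Z)$ totally symmetric on $\mathcal{D}$; since only the two eigenvalues $\pm\lambda$ occur on $\mathcal{D}$, any triple of eigenfields has two equal eigenvalues, and a permutation places the equal pair in the $(Y,Z)$ slots, where the factor $(\lambda_Y-\lambda_Z)$ annihilates $T$. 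That purely algebraic variant makes your proof self-contained and, as a byproduct, actually re-proves the totally geodesic property you were borrowing.
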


\begin{lemma}(Prop. $4.2$ of \cite{dp})\label{lem4.3} Let $(M^{2n + 1},\phi,\xi,\eta,g)$ be an almost Kenmotsu manifold such that $h' \neq 0$ and $\xi$ belonging to the $(k,-2)'$-nullity distribution. Then for any $X_\lambda, Y_\lambda, Z_\lambda \in [\lambda]'$ and $X_{-\lambda}, Y_{-\lambda}, Z_{-\lambda} \in [-\lambda]'$, the Riemann curvature tensor satisfies:
\bea \nonumber	R(X_\lambda,Y_\lambda)Z_{-\lambda} = 0, \eea
\bea \nonumber 	R(X_{-\lambda},Y_{-\lambda})Z_\lambda = 0, \eea
\bea \nonumber 	R(X_\lambda,Y_{-\lambda})Z_\lambda = (k + 2)g(X_\lambda,Z_\lambda)Y_{-\lambda}, \eea
\bea \nonumber 	R(X_\lambda,Y_{-\lambda})Z_{-\lambda} = -(k + 2)g(Y_{-\lambda},Z_{-\lambda})X_\lambda,\eea
 \bea \nonumber 	R(X_\lambda,Y_\lambda)Z_\lambda = (k - 2\lambda)[g(Y_\lambda,Z_\lambda)X_\lambda - g(X_\lambda,Z_\lambda)Y_\lambda], \eea
 \bea \nonumber 	R(X_{-\lambda},Y_{-\lambda})Z_{-\lambda} = (k + 2\lambda)[g(Y_{-\lambda},Z_{-\lambda})X_{-\lambda} - g(X_{-\lambda},Z_{-\lambda})Y_{-\lambda}] .\eea
\end{lemma}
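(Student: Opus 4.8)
The plan is to reduce everything to the behaviour of the Levi--Civita connection on the $h'$--eigenspaces and then read off the six curvature identities from the definition $R(X,Y)Z=\nabla_X\nabla_YZ-\nabla_Y\nabla_XZ-\nabla_{[X,Y]}Z$. First I would record the two structural ingredients that drive the computation. Since $h'=h\circ\phi$ and $h\phi+\phi h=0$ give $\phi h=-h'$, equation (\ref{2.3}) becomes $\nabla_X\xi=X-\eta(X)\xi+h'X$; in particular $\nabla_X\xi=(1+\lambda)X$ for $X\in[\lambda]'$ and $\nabla_X\xi=(1-\lambda)X$ for $X\in[-\lambda]'$. Combined with (\ref{2.7}) this yields $\lambda^2=-k-1$, hence the single algebraic identity $(1+\lambda)(1-\lambda)=1-\lambda^2=k+2$, which I expect to be the engine producing the constant $k+2$ in the two mixed formulas, while $(1\pm\lambda)^2$ will produce $k\mp2\lambda$ in the pure ones.

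The technical heart is a set of ``selection rules'' for $\nabla$. I would expand $(\nabla_Xh')Y=\nabla_X(h'Y)-h'(\nabla_XY)$ on the eigenspaces and match it against the right-hand side of Lemma \ref{lem4.2}. For $X,Y\in\mathcal D$ that right-hand side reduces to the pure $\xi$--term $-g(h'X+h'^2X,Y)\xi$, which vanishes whenever $X,Y$ lie in orthogonal eigenspaces. Reading off the tangential part then forces: $\nabla_{X_\lambda}Y_\lambda$ has no $[-\lambda]'$--component, $\nabla_{X_{-\lambda}}Y_{-\lambda}$ has no $[\lambda]'$--component, while $\nabla_{X_\lambda}Y_{-\lambda}\in[-\lambda]'$ and $\nabla_{X_{-\lambda}}Y_\lambda\in[\lambda]'$; moreover $\nabla_\xi h'=0$ shows $\nabla_\xi$ preserves each eigenspace. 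The $\xi$--components are fixed by metric compatibility, $g(\nabla_XY,\xi)=-g(Y,\nabla_X\xi)$, giving $\nabla_{X_\lambda}Y_\lambda=(\nabla_{X_\lambda}Y_\lambda)_\lambda-(1+\lambda)g(X_\lambda,Y_\lambda)\xi$ and the analogous formula with $1-\lambda$ on $[-\lambda]'$. These rules already reproduce, for instance, the totally geodesic/umbilical statements of Lemma \ref{lem4.1}.

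With the rules in hand I would substitute into the curvature definition and track components one eigenspace at a time. The vanishing identities $R(X_\lambda,Y_\lambda)Z_{-\lambda}=0$ and $R(X_{-\lambda},Y_{-\lambda})Z_\lambda=0$ follow because every surviving term stays in a single eigenspace and the contributions cancel. For the mixed identity the only surviving $[-\lambda]'$--contribution in $R(X_\lambda,Y_{-\lambda})Z_\lambda$ comes from differentiating the $\xi$--part of $\nabla_{X_\lambda}Z_\lambda$ along $Y_{-\lambda}$, which produces $(1+\lambda)(1-\lambda)g(X_\lambda,Z_\lambda)Y_{-\lambda}=(k+2)g(X_\lambda,Z_\lambda)Y_{-\lambda}$, and symmetrically the $\xi$--part of $\nabla_{Y_{-\lambda}}Z_{-\lambda}$ differentiated along $X_\lambda$ yields $-(k+2)g(Y_{-\lambda},Z_{-\lambda})X_\lambda$. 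For the two pure-eigenspace formulas I would argue through the Gauss equation: the $\xi$--component of $\nabla_{X_\lambda}Y_\lambda$ exhibits $[\lambda]'$ as a totally umbilical leaf with second fundamental form $-(1+\lambda)g(\cdot,\cdot)\xi$, and comparing the ambient sectional curvature $K(X,Y)=k-2\lambda$ from Lemma \ref{lem4.1}(b) with the Gauss formula forces the leaf to be intrinsically flat; the expression $(k-2\lambda)[g(Y_\lambda,Z_\lambda)X_\lambda-g(X_\lambda,Z_\lambda)Y_\lambda]$ is then exactly the umbilical normal correction, and likewise with $k+2\lambda$ on $[-\lambda]'$.

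The step I expect to be the main obstacle is controlling the purely tangential (leaf-intrinsic) parts $(\nabla_{X_\lambda}Y_\lambda)_\lambda$, for which Lemma \ref{lem4.2} gives no direct formula: one must show that in each curvature expression these intrinsic connection terms either cancel in the antisymmetrised combination or assemble into the stated constant-curvature tensor, and that the spurious $[\lambda]'$-- and $\xi$--components of the mixed terms vanish. Resolving this cleanly is precisely where the totally geodesic leaf structure of $[\xi]\oplus[\pm\lambda]'$ and the sectional-curvature data of Lemma \ref{lem4.1} must be invoked, rather than the pointwise connection selection rules alone.
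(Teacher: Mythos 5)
First, a point of calibration: the paper does not prove this statement at all; Lemma \ref{lem4.3} is quoted verbatim as Proposition 4.2 of \cite{dp}. So your attempt cannot be compared with a proof in the paper, only judged on its own merits. Much of it is sound. The selection rules you extract from Lemma \ref{lem4.2} are correct and correctly derived: writing $\nabla_{X_\lambda}Y_\lambda=(\nabla_{X_\lambda}Y_\lambda)_\lambda-(1+\lambda)g(X_\lambda,Y_\lambda)\xi$, $\nabla_{X_\lambda}Y_{-\lambda}\in[-\lambda]'$, $\nabla_{X_{-\lambda}}Y_\lambda\in[\lambda]'$, and $\nabla_\xi h'=0$ all follow from expanding $(\nabla_Xh')Y$ against the right-hand side of (\ref{4.1}) and using $\lambda\neq 0$. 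With these, the two mixed identities do come out as you describe: the $[-\lambda]'$-component of $R(X_\lambda,Y_{-\lambda})Z_\lambda$ is $(1+\lambda)(1-\lambda)g(X_\lambda,Z_\lambda)Y_{-\lambda}=(k+2)g(X_\lambda,Z_\lambda)Y_{-\lambda}$, the spurious $\xi$-component dies because (\ref{4.2}) gives $R(X,Y)\xi=0$ for $X,Y\in\mathcal{D}$, and the spurious $[\lambda]'$-component dies from the pair symmetry $g(R(X_\lambda,Y_{-\lambda})Z_\lambda,W_\lambda)=g(R(Z_\lambda,W_\lambda)X_\lambda,Y_{-\lambda})$ combined with your rule that $R(Z_\lambda,W_\lambda)X_\lambda\in[\xi]\oplus[\lambda]'$. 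For the pure identities, invoking Lemma \ref{lem4.1}(b) is legitimate (it is likewise quoted as established), and pointwise polarization of the restricted $4$-tensor then pins the tangential part to the constant-curvature form, the normal components vanishing as before.

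The genuine gap is the pair of vanishing identities. Your stated mechanism, that ``every surviving term stays in a single eigenspace and the contributions cancel,'' does not work: the selection rules only show $R(X_\lambda,Y_\lambda)Z_{-\lambda}\in[-\lambda]'$, and the three $[-\lambda]'$-valued terms in the curvature definition involve the unknown leaf-intrinsic connections, so there is no visible cancellation. Your fallback (the totally geodesic leaf structure together with the sectional-curvature data of Lemma \ref{lem4.1}) cannot close this either: polarization within a fixed subspace only determines curvature components with all four slots in that subspace, whereas $g(R(X_\lambda,Y_\lambda)Z_{-\lambda},W_{-\lambda})$ has two slots in each eigenspace, and Lemma \ref{lem4.1}(b) says nothing about planes spanned by sums of eigenvectors from different eigenspaces. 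The repair, however, is already in your hands: apply the first Bianchi identity to $(X_\lambda,Y_\lambda,Z_{-\lambda})$ and substitute the mixed identities you have just proved, giving $R(X_\lambda,Y_\lambda)Z_{-\lambda}=-R(Y_\lambda,Z_{-\lambda})X_\lambda-R(Z_{-\lambda},X_\lambda)Y_\lambda=-(k+2)g(X_\lambda,Y_\lambda)Z_{-\lambda}+(k+2)g(X_\lambda,Y_\lambda)Z_{-\lambda}=0$, and symmetrically $R(X_{-\lambda},Y_{-\lambda})Z_\lambda=0$. (The classical alternative, which is essentially what \cite{dp} does, is to covariantly differentiate the nullity condition (\ref{4.2}) and use the second Bianchi identity.) Note that this ordering matters: the mixed identities must be established first, from the selection rules, and only then do the vanishing ones follow; with that one repair your outline becomes a complete proof.
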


From (\ref{2s1.7}) we have
\bea 
R(X,Y)\xi = k[\eta(Y)X-\eta(X)Y] + \mu[\eta(Y)h'X-\eta(X)h'Y],\label{4.2} 
\eea 
where $k,\;\mu \in \mathbb{R}.$ Also we get from (\ref{4.2})
\bea
 R(\xi,X)Y = k[g(X,Y)\xi-\eta(Y)X]+\mu[g(h'X,Y)\xi-\eta(Y)h'X].\label{4.3}
\eea

 \begin{theorem} \label{th1.3}
A $(k,\mu)'$-almost Kenmotsu manifold with $h' \neq 0$ admitting a HPCV field $V$ such that $\phi V \neq 0$ is either locally isometric to the Riemannian product of an $(n + 1)$-dimensional manifold of constant sectional curvature $-4$ and a flat $n$-dimensional manifold or $V$ is an eigenvector of $h'$.
\end{theorem}
 \begin{proof}
 Substituting $X = \xi$ in (\ref{3.4}) we have
\bea
R(\xi,Y)V = (\xi a)Y - (Ya)\xi + (\xi b)\phi Y + b\phi Y + bhY. \label{4.4}
\eea
Taking inner product of (\ref{4.4}) with $\xi$ we obtain
\bea
g(R(\xi,Y)V,\xi) = (\xi a)\eta(Y) - (Ya). \label{4.5}
\eea
Making use of (\ref{4.2}) we get
\bea
\nonumber g(R(\xi,Y)V,\xi) &=& - g(R(\xi,Y)\xi,V) \\&=& - k\eta(Y)\eta(V) + kg(Y,V) - 2g(h'Y,V). \label{4.6}
\eea
Equations (\ref{4.5}) and (\ref{4.6}) together implies
\bea
- k\eta(Y)\eta(V) + kg(Y,V) - 2g(h'Y,V) = (\xi a)\eta(Y) - (Ya), \label{4.7}
\eea
which implies
\bea
- k\eta(V)\xi + kV - 2h'V = (\xi a)\xi - Da. \label{4.8}
\eea
Now taking inner product of (\ref{4.4}) with $V$ gives
\bea
\nonumber (\xi a)g(Y,V) - (Ya)\eta(V) + (\xi b)g(\phi Y,V) + bg(\phi Y,V) + bg(hY,V) = 0,
\eea
which implies
\bea
(\xi a)V - (Da)\eta(V) - (\xi b)\phi V - b\phi V + bhV = 0. \label{4.9}
\eea
Eliminating $Da$ from (\ref{4.8}) and (\ref{4.9}) we have
\bea
- (\xi a)\phi^2 V - k\eta(V)\phi^2 V - 2\eta(V)h'V - (\xi b)\phi V - b\phi V + bhV = 0. \label{4.10}
\eea
Differentiating (\ref{4.8}) covariantly along any vector field $X$ and using (\ref{1.2}), (\ref{2.3}), Lemma \ref{lem4.2} and the value of $\mu$ from Lemma \ref{lem4.1} we infer
\bea
\nonumber && - k[g(X - \eta(X)\xi - \phi hX,V) + g(\xi,aX + b\phi X)]\xi - k\eta(V)[X - \eta(X)\xi - \phi hX] \\ \nonumber && + k[aX + b\phi X] - 2[- g(h'X + h'^2X,V)\xi - \eta(V)(h'X + h'^2X) + h'(aX + b\phi X)] \\ \nonumber &&  = (\xi a)[X - \eta(X)\xi - \phi hX] + (X(\xi a))\xi - \nabla_X Da.
\eea
Taking inner product of the foregoing equation with $Y$ we obtain
\bea
\nonumber && - k[g(X,V) - \eta(X)\eta(V) - g(\phi hX,V) + a\eta
(X)]\eta(Y) - k\eta(V)[g(X,Y) \\ \nonumber &&- \eta(X)\eta(Y) - g(\phi hX,Y)] + k[ag(X,Y) + bg(\phi X,Y)] - 2[- g(h'X \\ \nonumber && + h'^2X,V)\eta(Y) - \eta(V)g(h'X + h'^2X,Y) + g(ah'X - bhX,Y)] \\ &&  \;\;\;= (\xi a)[g(X,Y) - \eta(X)\eta(Y) - g(\phi hX,Y)] + (X(\xi a))\eta(Y) - g(\nabla_X Da,Y). \label{4.11}
\eea
Antisymmetrizing the above equation and using the symmetry of the Hessian operator, that is, $\operatorname{Hess}_a(X,Y) = g(\nabla_X Da,Y) = g(\nabla_Y Da,X)$ we obtain
\bea
\nonumber &&- k[g(X,V)\eta(Y) - g(Y,V)\eta(X) - g(\phi hX,V)\eta(Y) + g(\phi hY,V)\eta(X)] \\ \nonumber && + 2kbg(\phi X,Y) - 2[- g(h'X + h'^2X,V)\eta(Y) + g(h'Y + h'^2Y,V)\eta(X)] \\ && = (X(\xi a))\eta(Y) - (Y(\xi a))\eta(X). \label{4.12}
\eea
Putting $X = \phi X$ and $Y = \phi Y$ in the previous equation we infer that $2kbg(\phi X,Y) = 0$, which implies $b = 0$ as $k < -1$. Hence from (\ref{4.9}) we have
\bea
(\xi a)V = (Da)\eta(V). \label{4.13}
\eea
Now letting $Y \in [\lambda]'$ in (\ref{4.7}) yields
\bea
\nonumber (k - 2\lambda)g(Y,V) = - (Ya),
\eea
which implies
\bea
Da = (2\lambda - k)V \;\;\; \mathrm{and}\;\;\; (\xi a) = (2\lambda - k)\eta(V). \label{4.14}
\eea
Now using $b = 0$ and the value of $(\xi a)$ from (\ref{4.14}) in (\ref{4.10}) we have
\bea
2(\lambda + 1)\eta(V)(h'V + \phi^2 V) = 0,
\eea
which implies either $\lambda = -1$ or $\eta(V) = 0$ or $h'V = - \phi^2 V$.\\
Case 1: If $\lambda = -1$, then from $\lambda^2 = - k - 1$ we obtain $k = -2$. Now letting $X,\;Y,\;Z \in [\lambda]'$ and noticing that $k = -2$, $\lambda = - 1$, from Lemma \ref{lem4.3} we have
\bea
\nonumber 	R(X_\lambda,Y_\lambda)Z_\lambda = 0,
\eea
and
 \bea
 \nonumber 	R(X_{-\lambda},Y_{-\lambda})Z_{-\lambda} = - 4[g(Y_{-\lambda},Z_{-\lambda})X_{-\lambda} - g(X_{-\lambda},Z_{-\lambda})Y_{-\lambda}],
  \eea
  for any $X_\lambda,Y_\lambda,Z_\lambda \in [\lambda]'$ and $X_{-\lambda},Y_{-\lambda},Z_{-\lambda} \in [-\lambda]'$. Also noticing $\mu = -2$ it follows from Lemma \ref{lem4.1} that $K(X,\xi) = -4$ for any $X \in [-\lambda]' $ and $K(X,\xi) = 0$ for any $X \in [\lambda]' $. Again from Lemma \ref{lem4.1} we see that $K(X,Y) = -4$ for any $X, Y \in [-\lambda]' $ and $K(X,Y) = 0$ for any $X, Y \in [\lambda]' $. As is shown in \cite{dp} that the distribution $[\xi] \oplus [\lambda]'$ is integrable with totally geodesic leaves and the distribution $[-\lambda]'$ is integrable with totally umbilical leaves by $H = -(1 - \lambda)\xi$, where $H$ is the mean curvature tensor field for the leaves of $[-\lambda]'$ immersed in $M^{2n+1}$. Here $\lambda = -1$, then the two orthogonal distributions $[\xi] \oplus [\lambda]'$ and $[-\lambda]'$ are both integrable with totally geodesic leaves immersed in $M^{2n+1}$. Then we can say that $M^{2n+1}$ is locally isometric to $\mathbb{H}^{n+1}(-4)$ $\times$ $\mathbb{R}^n$.\\
Case 2: If $\eta(V) = 0$, then from (\ref{4.14}) we have $(\xi a) = 0$. Then from (\ref{4.8}) we have $Da = 2h'V - kV$. Now equating the value of $Da$ from this and (\ref{4.14}) we get $h'V = \lambda V$. This shows that $V$ is an eigenvector of $h'$.\\
Case 3: If  $h'V = - \phi^2 V = V - \eta(V)\xi$, then applying $h'$ on both side of it we have $h'^2 V = h'V$. Hence using (\ref{2.7}) we obtain $-(k + 2)(V - \eta(V)\xi) = 0$. Now $V - \eta(V)\xi \neq 0$ as $\phi V \neq 0$ by hypothesis. Therefore, we have $k = -2$. Now from $\lambda^2 = - k - 1$, we obtain $\lambda^2 = 1$. Without loss of generality we assume that $\lambda = -1$. Then by the same argument as in Case 1 we get $M^{2n+1}$ is locally isometric to $\mathbb{H}^{n+1}(-4)$ $\times$ $\mathbb{R}^n$. This completes the proof.
\end{proof}

\subsection*{Example}  In \cite{ddey}, the author present an example of a $5$-dimensional $(k,\mu)'$-almost Kenmotsu manifold with $k = - 2$ and $\mu = - 2$. Then by the same argument as in Case 1 of Theorem \ref{th1.3}, $M^{5}$ is locally isometric to $\mathbb{H}^{3}(-4)$ $\times$ $\mathbb{R}^2$.\\
Let $X = \alpha_1 \xi + \alpha_2 e_2 + \alpha_3 e_3 + \alpha_4 e_4 + \alpha_5 e_5$ be any vector field on $M^{5}$ and let $V = e_4$. Then, $\nabla_X V = 0 = a X + b \phi X$, where $a = b = 0$. Hence, $V = e_4$ is an example of a HPCV field, where $\phi V = \phi e_4 = - e_2 \neq 0$. Hence, Theorem \ref{th1.3} is verified.\\

\noindent
\textbf{Acknowledgement:} The author Dibakar Dey is supported by the Council of Scientific and Industrial Research, India (File no: 09/028(1010)/2017-EMR-1) in the form of Senior Research Fellowship.

\end{document}